\newtheorem{theorem}{Theorem}[section]
\newtheorem{prop}[theorem]{Proposition}
\newtheorem{lemma}[theorem]{Lemma}
\newtheorem{cor}[theorem]{Corollary}
\theoremstyle{definition}
\newtheorem{definition}[theorem]{Definition}
\newtheorem{example}[theorem]{Example}
\numberwithin{equation}{section}
\newcommand\nn{\mathbb{N}}
\newcommand\zz{\mathbb{Z}}
\begin{document}

	\mbox{}
	\title{On Delta Sets and their Realizable Subsets \\ in Krull Monoids with Cyclic Class Groups}
	\author{Scott T. Chapman}
	\address{Department of Mathematics\\Sam Houston State University\\Huntsville, TX 77341}
	\email{scott.chapman@shsu.edu}
	
	\author{Felix Gotti}
	\address{Department of Mathematics\\University of Florida\\Gainesville, FL 32611}
	\email{felixgotti@ufl.edu}
	
	\author{Roberto Pelayo}
	\address{Department of Mathematics\\University of Hawai'i at Hilo\\Hilo, HI 96720}
	\email{robertop@hawaii.edu}
	
	\date{\today}
	
	\begin{abstract}
		Let $M$ be a commutative cancellative monoid. The set $\Delta(M)$, which consists of all positive integers which are distances between consecutive factorization lengths of elements in $M$, is a widely studied object in the theory of nonunique factorizations. If $M$ is a Krull monoid with cyclic class group of order $n \ge 3$, then it is well-known that $\Delta(M) \subseteq \{1, \dots, n-2\}$. Moreover, equality holds for this containment when each class contains a prime divisor from $M$. In this note, we consider the question of determining which subsets of $\{1, \dots, n-2\}$ occur as the delta set of an individual element from $M$. We first prove for $x \in M$ that if $n - 2 \in \Delta(x)$, then $\Delta(x) = \{n-2\}$ (i.e., not all subsets
		of $\{1,\dots, n-2\}$ can be realized as delta sets of individual elements). We close by proving an Archimedean-type property for delta sets from Krull monoids with finite cyclic class group: for every natural number m, there exist a Krull monoid $M$ with finite cyclic class group such that $M$ has an element $x$ with $|\Delta(x)|  \ge m$.
	\end{abstract}
	
	\maketitle
	
	\section{Introduction} \label{sec:introduction}
	
	The arithmetic of Krull monoids is a well-studied area in the theory of nonunique factorizations. The interested reader can find a good summary of their known arithmetic properties in the monograph \cite[Chapter~6]{GH06}. We focus here on Theorem~6.7.1 of \cite{GH06}, where the authors show that
	\[
		\Delta(M) = \{1, \dots, n-2\}
	\]
	for $M$ a Krull monoid with cyclic class group of order $n \ge 3$ where each class contains a prime divisor of $M$. Here $\Delta(M)$ represents the set of all positive integers which are distances between consecutive factorization lengths of elements in $M$.
	
	We ask in this note a question related to the above equality that is seemingly unasked in the literature: Which subsets $T \subseteq \{1,\dots,n-2\}$ are realized as the delta set of an individual element in $M$ (i.e., for which $T$ does there exist an $x \in M$ such that $T = \Delta(x))$? Based on the structure theorem for sets of lengths in Krull monoids with finite class group (see \cite[Chapter~4]{GH06}), it is reasonable to assume that not all subsets of $\{1, \dots, n-2\}$ will be realized. We verify this in Theorem~\ref{thm:first main result} by showing for $x \in M$ that if $n-2 \in \Delta(x)$, then $\Delta(x) = \{n-2\}$. We contrast this in Theorem~\ref{thm:Archimedean property} by showing that we can construct delta sets of arbitrarily large size (i.e., for any $m \in \nn$ there exists a Krull monoid $M$ with finite cyclic class group such that $M$ has an element $x$ with $|\Delta(x)| \ge m$).
	
	\section{Definitions and Background} \label{sec:definitions and background}
	
	We open with some basic definitions from the theory of nonunique factorizations. For a commutative cancellative monoid $M$, let $\mathcal{A}(M)$ represent the set of \emph{atoms} (i.e., irreducible elements) of $M$, and $M^\times$ its set of units. We provide here an informal description of factorizations and associated notions. From a more formal point of view, factorizations can be considered as elements in the \emph{factorization monoid} $\mathsf{Z}(M)$, which is defined as the free abelian monoid with basis $\mathcal{A}(M/M^\times)$; details can be found in the first chapter of \cite{GH06}.
	
	To simplify our initial discussion, we suppose that $M$ is reduced (i.e., has a unique unit). We say that $a_1 \dots a_k$ is a \emph{factorization} of $x \in M$ if $a_1, \dots, a_k \in \mathcal{A}(M)$ and $x = a_1 \dots a_k$ in $M$. Two factorizations are equivalent if there is a permutation of atoms carrying one factorization to the other. We denote by $\mathsf{Z}(x) \subseteq \mathsf{Z}(M)$ the set of all factorizations of $x$. If $z \in \mathsf{Z}(x)$, then let $|z|$ denote the number of atoms in the factorization $z$ of $x$. We call $|z|$ the \emph{length} of $z$. Now, let $x \in M \setminus M^\times$ with factorizations
	\[
		z = \alpha_1 \cdots \alpha_t \beta_1 \cdots \beta_s \ \text{ and } \ z' = \alpha_1 \cdots \alpha_t \gamma_1 \cdots \gamma_u
	\]
	where for each $1 \le i \le s$ and $1 \le j \le u$, $\beta_i \neq \gamma_j$. Define
	\[
		\gcd(z, z') = \alpha_1 \cdots \alpha_t
	\]
	and
	\[
		\mathsf{d}(z, z') = \max\{s, u\}
	\]
	to be the \emph{distance} between $z$ and $z'$. The basic properties of this distance function can be found in \cite[Proposition~1.2.5]{GH06}.
	
	An $N$-\emph{chain of factorizations} from $z$ to $z'$ is a sequence $z_0, \dots, z_k$ such that each $z_i$ is a factorization of $x$, $z_0 = z$, $z_k = z'$ and $\mathsf{d}(z_i, z_{i+1}) \le N$ for all $i$. The \emph{catenary degree} of $x$, denoted $\mathsf{c}(x)$, is the minimal $N \in \nn_0 \cup \{\infty\}$ such that for any two factorizations $z, z'$ of $x$, there is an $N$-chain from $z$ to $z'$. The \emph{catenary degree} of $M$, denoted by $\mathsf{c}(M)$, is defined by
	\[
		\mathsf{c}(M) = \sup\{\mathsf{c}(x) \mid x \in M \setminus M^\times\}.
	\]
	A review of the known facts concerning the catenary degree can be found in \cite[Chapter~3]{GH06}. An algorithm which computes the catenary degree of a finitely generated monoid can be found in \cite{CGLPR06}, and a more specific version for numerical monoids in \cite{CGL09}.
	
	We shift from considering particular factorizations to analyzing their lengths. For $x \in M \setminus M^\times$, we define
	\[
		\mathsf{L}(x) = \{n \in \nn \mid \text{ there are } \ \alpha_1, \dots, \alpha_n \in \mathcal{A}(M) \ \text{ with } \ x = \alpha_1 \cdots \alpha_n\}.
	\]
	We refer to $\mathsf{L}(x)$ as the \emph{set of lengths} of $x$ in $M$. Further, set
	\[
		\mathcal{L}(M) = \{\mathsf{L}(x) \mid x \in M \setminus M^\times\},
	\]
	which we refer to as the \emph{system of sets of lengths} of $M$. The interested reader can find many recent advances concerning sets of lengths in \cite{GG00}, \cite{wS09}, and \cite{wS09a}. Given $x \in M \setminus M^\times$, write its length set in the form
	\[
		\mathsf{L}(x) = \{n_1, \dots, n_k\},
	\]
	where $n_i < n_{i+1}$ for $1 \le i \le k-1$. The \emph{delta set} of $x$ is defined by
	\[
		\Delta(x) = \{n_i - n_{i-1} \mid 2 \le i \le k\},
	\]
	and the \emph{delta set} of $M$ (also called the \emph{set of distances} of $M$) by
	\[
		\Delta(M) = \bigcup_{x \in M \setminus M^\times} \Delta(x)
	\]
	(see again \cite[Chapter ~1.4]{GH06}). Computations of delta sets in various types of monoids can be found in \cite{BCS08}-\cite{CCS07}.
	
	A monoid $M$ is called a \emph{Krull monoid} if there is a monoid homomorphism $\varphi \colon M \to D$ where $D$ is a free abelian monoid and $\varphi$ satisfies the following two conditions:
	\begin{enumerate}
		\item if $a, b \in M$ and $\varphi(a) \mid \varphi(b)$ in $D$, then $a \mid b$ in $M$,
		\item for every $\alpha \in D$ there exist $a_1, \dots, a_n \in M$ with
		\[
			\alpha = \gcd\{\varphi(a_1), \dots, \varphi(a_n)\}.
		\]
	\end{enumerate}
	Clearly, a monoid $M$ is Krull if and only if the associated reduced monoid is Krull. The basis elements of $D$ are called the prime divisors of $M$. The above properties guarantee that Cl$(M) = D/\varphi(M)$ is an abelian group, which we call the \emph{class group} of $M$ (see \cite[Section~2.3]{GH06}). Note that since any Krull monoid is isomorphic to a submonoid of a free abelian monoid, a Krull monoid is commutative, cancellative, and atomic. The class of Krull monoids contains many well-studied types of monoids, such as the multiplicative monoid of a ring of algebraic integers (see \cite{BC11,aG09,GH06}).
	
	Let $G$ be an abelian group and $\mathcal{F}(G)$ the free abelian monoid on $G$. The elements of $\mathcal{F}(G)$, which we write in the form
	\[
		X = g_1 \dots g_l = \prod_{g \in G} g^{\mathsf{v}_g(X)},
	\]
	are called \emph{sequences} over $G$. We set $-X = (-g_1) \cdots (-g_l)$. The exponent $\mathsf{v}_g(X)$ is the \emph{multiplicity} of $g$ in $X$. The \emph{length} of $X$ is defined as
	\[
		|X| = l =\sum_{g \in G} \mathsf{v}_g(X)
	\]
	(note that as $\mathcal{F}(G)$ and $\mathsf{Z}(x)$ are both free abelian monoids; there is really no redundancy in this notation). For every $I \subseteq [1, l]$, the sequence $Y = \prod_{i \in I} g_i$ is called a \emph{subsequence} of $X$. The subsequences are precisely the divisors of $X$ in the free abelian monoid $\mathcal{F}(G)$. The submonoid
	\[
		\mathcal{B}(G) = \bigg\{ X \in \mathcal{F}(G) \ \bigg{|} \ \sum_{g \in G} \mathsf{v}_g(X) g = 0 \bigg\}
	\]
	is known as the \emph{block monoid} on $G$, and its elements are referred to as \emph{zero-sum sequences} or \emph{blocks} over $G$ (\cite[Section~2.5]{GH06} is a good general reference on block monoids). If $S$ is a subset of $G$, then the submonoid
	\[
		\mathcal{B}(G, S) = \{ X \in \mathcal{B}(G) \mid \mathsf{v}_g(X) = 0 \ \emph{ if } \ g \notin S \}
	\]
	of $\mathcal{B}(G)$ is called the \emph{restriction} of $\mathcal{B}(G)$ to $S$. Block monoids are important examples of Krull monoids and their true relevance in the theory of nonunique factorizations lies in the following result.
	
	\begin{prop} \label{prop:relation of Krull monoids and block monoids}\cite[Theorem~3.4.10.3]{GH06}
		Let $M$ be a Krull monoid with class group $G$ and let $S$ be the set of classes of $G$ which contain prime divisors. Then
		\[
			\mathcal{L}(M) = \mathcal{L}(\mathcal{B}(G, S)).
		\]
	\end{prop}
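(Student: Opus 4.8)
The plan is to realize the identity of length systems as a consequence of a length-preserving (\emph{transfer}) homomorphism between $M$ and $\mathcal{B}(G,S)$. First I would reduce to the case that $M$ is reduced: since $M$ is Krull if and only if its associated reduced monoid is Krull, and since $\mathsf{L}(x)$ depends only on the image of $x$ in $M/M^\times$, passing to $M/M^\times$ changes neither side of the claimed equality. Thus I may assume $M$ is reduced with divisor homomorphism $\varphi \colon M \to D = \mathcal{F}(P)$, where $P$ is the set of prime divisors; note that condition (1) forces $\varphi$ to be injective, since $\varphi(a)=\varphi(b)$ yields $a \mid b$ and $b \mid a$. Writing $[p] \in G$ for the class of $p \in P$, I would introduce the \emph{block homomorphism} $\bm{\beta} \colon D \to \mathcal{F}(G)$ determined by $p \mapsto [p]$, and set $\beta = \bm{\beta} \circ \varphi \colon M \to \mathcal{F}(G)$. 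The whole proof then reduces to showing that $\beta$ is a homomorphism onto $\mathcal{B}(G,S)$ that preserves sets of lengths.

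Next I would record the elementary properties of $\bm\beta$. It is a monoid homomorphism that preserves length ($|\bm\beta(\alpha)| = |\alpha|$), and for $\alpha \in D$ one has $\bm\beta(\alpha) \in \mathcal{B}(G)$ exactly when the class of $\alpha$ vanishes in $G = D/\varphi(M)$, i.e. exactly when $\alpha \in \varphi(M)$. Consequently $\beta$ maps into $\mathcal{B}(G)$, and since every $[p]$ lies in $S$ its image in fact lies in $\mathcal{B}(G,S)$. Surjectivity onto $\mathcal{B}(G,S)$ follows by lifting: any $B \in \mathcal{B}(G,S)$ is a zero-sum sequence supported on $S$, so choosing for each occurrence of a class $g$ a prime $p \in P$ with $[p] = g$ produces $\alpha \in D$ whose class vanishes, hence $\alpha \in \varphi(M)$ and $\beta(\varphi^{-1}(\alpha)) = B$. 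Finally, in the reduced setting $\beta(a)$ is the empty block only when $\varphi(a)$ is empty, i.e. only when $a = 1$.

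The core step, and the one I expect to be the main obstacle, is the factorization-lifting property: if $a \in M$ and $\beta(a) = B_1 B_2$ in $\mathcal{B}(G,S)$, then $a = a_1 a_2$ with $\beta(a_i) = B_i$. I would establish this at the level of $D$ first. Since $\bm\beta$ preserves multiplicities class by class and $B_1 B_2 = \bm\beta(\varphi(a))$, I can distribute the prime powers occurring in $\varphi(a)$ between two divisors $\alpha_1, \alpha_2$ of $\varphi(a)$ in $D$ so that $\bm\beta(\alpha_i) = B_i$; this is a purely combinatorial allocation within each fibre $\{p : [p] = g\}$. Because each $B_i$ is a zero-sum sequence, each $\alpha_i$ lies in $\varphi(M)$, say $\alpha_1 = \varphi(a_1)$. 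Now the divisor-closedness condition (1) is exactly what converts the divisibility $\varphi(a_1) \mid \varphi(a)$ in $D$ into $a_1 \mid a$ in $M$, and cancellation together with the injectivity of $\varphi$ then forces the complementary factor $a_2 = a/a_1$ to satisfy $\varphi(a_2) = \alpha_2$, hence $\beta(a_2) = B_2$. This is the step where the Krull hypothesis is genuinely used, and making the allocation and the pullback interact correctly is the delicate point.

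With lifting in hand, the conclusion is routine. Applying the lifting property to an atom $u \in \mathcal{A}(M)$ shows that $\beta(u)$ is an atom of $\mathcal{B}(G,S)$, since a nontrivial splitting of $\beta(u)$ would split $u$; hence $\beta$ sends atoms to atoms, and every factorization $a = u_1 \cdots u_n$ maps to a factorization $\beta(a) = \beta(u_1)\cdots\beta(u_n)$ of the same length, giving $\mathsf{L}(a) \subseteq \mathsf{L}(\beta(a))$. Conversely, iterating the lifting property across any factorization of $\beta(a)$ into atoms produces a factorization of $a$ of equal length, giving the reverse inclusion. Thus $\mathsf{L}(a) = \mathsf{L}(\beta(a))$ for every $a \in M \setminus M^\times$, and since $\beta$ is onto $\mathcal{B}(G,S)$ this yields $\mathcal{L}(M) = \mathcal{L}(\mathcal{B}(G,S))$.
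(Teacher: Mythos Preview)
The paper does not supply its own proof of this proposition; it is simply quoted from \cite[Theorem~3.4.10.3]{GH06}. Your argument via the block homomorphism $\beta = \bm{\beta}\circ\varphi$ and the verification that it is a transfer homomorphism (surjective, unit-reflecting, and satisfying the factorization-lifting property) is correct and is precisely the approach taken in the cited reference, so there is nothing to contrast.
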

	
	Hence, to understand the arithmetic of lengths of factorizations in a Krull monoid, one merely needs to understand the factorization theory of block monoids. Thus, while we state our results in the context of general Krull monoids, Proposition~\ref{prop:relation of Krull monoids and block monoids} allows us to write the proofs using block monoids.
	
	For our purposes, there are two arithmetic properties of the block monoid $\mathcal{B}(G)$, where $G$ is cyclic of order $n \ge 3$, which we will use later:
	\begin{enumerate}
		\item $\mathsf{c}(\mathcal{B}(G)) = n$ (see \cite[Theorem~6.4.7]{GH06});
		\item $\Delta(\mathcal{B}(G)) = \{1, \dots, \mathsf{c}(\mathcal{B}(G))-2\}$ (see \cite[Theorem~6.7.1.4]{GH06}).
	\end{enumerate}
	Thus $\Delta(\mathcal{B}(G)) = \{1, \dots, n-2\}$. For any finite abelian group $G$ with $|G| \ge 3$, the set $\Delta(\mathcal{B}(G))$ is an interval whose minimum equals $1$ but whose maximum is not
	known in general \cite{GGS11,GY12}. We will be interested in the following types of subsets of $\Delta(M)$.
	
	\begin{definition}
		Let $M$ be a commutative cancellative monoid and suppose $T$ is a nonempty subset of $\Delta(M)$. We call $T$ \emph{realizable} in $\Delta(M)$ if there is an element $x \in M$ with $\Delta(x) = T$.
	\end{definition}
	
	\begin{example} \label{ex:realizable subsets}
		We illustrate some aspects of the last definition with several examples.
		
		\begin{enumerate}
			
			\item Let $M$ be any Krull monoid $M$ with $\Delta(M) = \{c\}$ for $c \in \nn$. Clearly any element $x \in M$ with $|\mathsf{L}(x)| > 1$ yields $\Delta(x) = \{c\}$ and hence every subset of $\Delta(M)$ is realizable. A large class of Dedekind domains (whose multiplicative monoids are Krull) with such delta sets are constructed in \cite{CS87}. Another example of such a monoid is a primitive numerical monoid whose minimal generating set forms an arithmetic sequence (see \cite[Theorem~3.9]{BCKR06}).
			
			\item Let $G$ be any infinite abelian group and $S$ any finite nonempty subset of $\{2, 3, 4, \dots\}$. By a well-known theorem of Kainrath (\cite{fK99}, \cite[Section~7.4]{GH06}), there is a block $B \in \mathcal{B}(G)$ with $\mathsf{L}(B) = S$. From this, it easily follows that $\Delta(\mathcal{B}(G)) = \nn$. Moreover, it also easily follows that any finite subset $T$ of $\nn$ is realizable in $\Delta(\mathcal{B}(G))$. In this example we can actually say more. If $H$ is a monoid, then recall that a submonoid $S \subset H$ is \emph{divisor-closed} if for for all $a \in S$ and $b \in H$ one has that $b \mid_H a$ implies that $b \in S$. Set
			\[
				\phantom{b-space} \Delta^*(H) = \{\min \Delta(S) \mid S \subset H \ \text{is a divisor-closed submonoid and } \Delta(S) \neq \emptyset\}.
			\]
			It follows from \cite{CSS08} that $\Delta^*(\mathcal{B}(G)) = \nn$.
			
			\item In general, there are commutative cancellative monoids $M$ with unrealizable subsets of $\Delta(M)$. For our initial example, we again appeal to numerical monoids. Let $S$ be the monoid of positive integers under addition generated by $4$, $6$, and $15$ (i.e., $S = \langle 4, 6, 15 \rangle$). By \cite[Example~2.6]{BCKR06}, it follows that $\Delta(S) = \{1, 2, 3\}$. By \cite[Theorem~1]{CHK09}, the sequence of sets $\{\Delta(n)\}_{n \in S}$ is eventually periodic, and hence using the periodic bound in that theorem allows one to check for all realizable subsets of $\{1, 2, 3\}$ in finite time. Using programming from the GAP NumericalSgps package \cite{DGM13}, one can verify that $\{1\}$, $\{1, 2\}$, and $\{1, 3\}$ are the only realizable subsets of $\Delta(S)$.
			
			\item There are three generated numerical monoids that behave differently than the two types discussed above. For instance, let $S$ be the numerical monoid generated by $7$, $10$, and $12$ (i.e., $S = \langle 7, 10, 12 \rangle$). By \cite[Example~2.5]{BCKR06}, one has $\Delta(S) = \{1, 2\}$. Again, using the GAP programming \cite{DGM13}, we find that $\Delta(34) = \{1\}$, $\Delta(42) = \{2\}$, and $\Delta(56) = \{1, 2\}$. Thus, all nonempty subsets of $\Delta(S)$ are realizable.
			
			\item If $G$ is cyclic of order $n \ge 3$, then by \cite[Corollary~2.3.5]{aG09}, for each $1 \le i \le n-2$, the set $\{i\}$ is realizable in $\Delta(\mathcal{B}(G))$.
			
			\item Suppose $G$ is an abelian group and $S_1 \subseteq S_2$ are subsets of $G$. By the properties of the block monoid, if $B \in \mathcal{B}(G, S_1)$, then $\mathsf{L}(B)$ is equal in both $\mathcal{B}(G, S_1)$ and $\mathcal{B}(G, S_2)$. Thus, if $T$ is realizable in $\Delta(\mathcal{B}(G, S_1))$, then $T$ is realizable in $\Delta(\mathcal{B}(G, S_2))$. Elementary examples show that this relationship does not work conversely.
			
		\end{enumerate}
	\end{example}
	
	Our eventual goal is to show that in contrast to $\mathcal{B}(G)$ where $G$ is infinite abelian, not all subsets of $\Delta(\mathcal{B}(G))$ are realizable when $G$ is finite cyclic.
	
	\section{Main Results} \label{sec:main results}
	
	Our first lemma will be vital in the proof of Theorem~\ref{thm:first main result}.
	
	\begin{lemma} \label{lem:vital lemma}
		Let $G$ be a cyclic group of order $|G| = n \ge 3$, $g \in G$ with \emph{ord}$(g) = n$, $V = (-g)g$, and $W = g^n$. Let $u \in \mathcal{B}(G)$ and $z, z' \in \mathsf{Z}(u)$, say
		\[
			z = W^r(-W)^sBV^q, \ \text{ where } \ r, s > 0, q \ge 0,
		\]
		and $B$ is the product of atoms which are not in the set $\{W, -W, V \}$. If $|z'|-|z| = n-2$ and there are no factorizations of $u$ having length between $|z|$ and $|z'|$, then $B = 0^t$ for some $t \in \nn_0$.
	\end{lemma}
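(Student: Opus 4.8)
The plan is to argue by contradiction, assuming $B\neq 0^t$ for every $t$ and producing a factorization of $u$ whose length lies strictly between $|z|$ and $|z'|$, contradicting the hypothesis. The starting observation is the identity $W(-W)=g^n(-g)^n=\big((-g)g\big)^n=V^n$, which shows that trading one copy of $W(-W)$ for $V^n$ raises a factorization length by exactly $n-2$; since $r,s>0$ this already yields the factorization $W^{r-1}(-W)^{s-1}BV^{q+n}$ of length $|z|+(n-2)=|z'|$. So the maximal jump $n-2$ is ``explained'' by this pure trade, and the goal is to show that any genuine atom inside $B$ forces a smaller jump somewhere in the interval.

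First I would pin down the offending atom. Writing each element of $G$ as $ig$ with $0\le i\le n-1$, a short check shows that the only atoms of $\mathcal{B}(G)$ supported on $\{0,g,-g\}$ are $0$, $V$, $W$, and $-W$. Hence if $B\neq 0^t$ then $B$ has an atomic divisor $A\notin\{0,V,W,-W\}$, and $A$ must contain some element $h=jg$ with $2\le j\le n-2$ (in particular $n\ge 4$; the case $n=3$ is vacuous). Being a nontrivial atom, $A$ contains no copy of $0$, so setting $C=A\,h^{-1}$ we have $\mathsf{v}_0(C)=0$ and $C$ sums to $(n-j)g$.

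The heart of the argument is a local recombination built from $h$. Put $M'=(-g)^{n-j}C$, a zero-sum sequence with $\mathsf{v}_0(M')=0$. Two identities drive the proof: $(-W)A=\big((jg)(-g)^j\big)\,M'$ and $W(-W)A=\big((jg)g^{n-j}\big)\,M'\,V^j$, where $(jg)(-g)^j$ and $(jg)g^{n-j}$ are readily checked to be atoms. I would then split into two cases according to whether $M'$ is an atom. If $M'$ is not an atom, replace the two atoms $(-W)$ and $A$ occurring in $z$ by $(jg)(-g)^j$ together with a minimal factorization of $M'$; the resulting factorization of $u$ has length exceeding $|z|$ by $\min\mathsf{L}(M')-1$. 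If $M'$ is an atom, replace the three atoms $W,-W,A$ by $(jg)g^{n-j}$, $M'$, and $V^j$, obtaining a factorization whose length exceeds $|z|$ by $j-1$.

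It remains to verify that each offset lands in $\{1,\dots,n-3\}$, and this is exactly where the main obstacle lies: controlling the factorization length of the recombined piece $M'$. For the atom case the bound is immediate from $2\le j\le n-2$. For the non-atom case, $\min\mathsf{L}(M')\ge 2$ gives the lower bound, while for the upper bound I would recall that every atom of $\mathcal{B}(G)$ has length at most $|G|=n$, so $|C|\le n-1$ and $|M'|=(n-j)+|C|\le 2n-3$; since $M'$ has no copy of $0$, every atomic divisor has length at least $2$, whence $\max\mathsf{L}(M')\le\lfloor(2n-3)/2\rfloor=n-2$ and therefore $\min\mathsf{L}(M')-1\le n-3$. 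In either case $u$ acquires a factorization of length strictly between $|z|$ and $|z'|$, the desired contradiction, forcing $B=0^t$.
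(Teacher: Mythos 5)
Your proof is correct, and it reaches the contradiction by a genuinely different decomposition than the paper's, though within the same overall strategy: both arguments extract an atom $A$ of $B$ outside $\{0, V, W, -W\}$ and recombine it with copies of $W$ and $-W$ to produce a factorization of $u$ whose length exceeds $|z|$ by an amount in $\{1, \dots, n-3\}$. The paper splits on $|A| = 2$ versus $|A| \ge 3$: for $|A| = 2$ it rewrites $WA(-W)$ as a product of $2 + n - x$ atoms, and for $|A| \ge 3$ it uses a pigeonhole argument to find two elements $xg, yg$ of $A$ with $n - x$ and $n - y$ at most $\lfloor n/2 \rfloor$, so that $WA$ factors into at least $3$ but (since $A \neq -W$) at most $n-1$ atoms. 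You instead isolate a single element $jg$ of $A$ with $2 \le j \le n-2$, form the auxiliary block $M' = (-g)^{n-j}\,(A(jg)^{-1})$, and split on whether $M'$ is an atom, using the identities $(-W)A = \big((jg)(-g)^j\big)M'$ and $W(-W)A = \big((jg)g^{n-j}\big)M'V^j$; the needed upper bound comes from $\min\mathsf{L}(M') \le \lfloor(2n-3)/2\rfloor = n-2$, which rests on the same two facts the paper uses (the Davenport bound $|A| \le n$ and $\mathsf{v}_0 = 0$ forcing all atoms to have length at least $2$). Your route avoids the paper's simultaneous choice of two elements of $A$ and its separate treatment of length-$2$ atoms, at the cost of the case analysis on $\min\mathsf{L}(M')$; both are complete and of comparable length.
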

	
	\begin{proof}
		Since $0 \in \mathcal{B}(G)$ is a prime element and $\mathsf{L}(u) = \mathsf{v}_0(u) + \mathsf{L}(0^{-\mathsf{v}_0(u)}u)$, we may assume that $0 \nmid u$, and we have to show that $B = 1$ (i.e., $B$ is the empty block). This is obvious for $n = 3$, and hence we suppose that $n \ge 4$. Assume, by way of contradiction, that $A$ is an atom of $\mathcal{B}(G)$ dividing $B$. Since $0 \nmid B$, it follows that $|A| \ge 2$.
		
		Suppose that $|A| = 2$, say $A = (xg)(-xg)$ with $x \in \{2, \dots, \lfloor n/2 \rfloor\}$. Then
		\[
			W A(-W) = (g^x(-xg))((-g)^x(xg)V^{n - x}
		\]
		has a factorization of length $2+n-x$. Since $3 < 2 + n - x < n + 1$, we obtain a factorization of $u$ with length strictly between $|z|$ and $|z'|$, a contradiction.
		
		Suppose now that $|A| \ge 3$, say $A = (xg)(yg)(wg)A'$ with $x, y, w \in \{1, \dots, n - 1\}$ and $A' \in \mathcal{F}(G)$. Then two of the three elements $-xg$, $-yg$, $-wg$ are either in the set $C = \{g, 2g, \dots, \lfloor n/2 \rfloor g\}$ or in the set $D = \{(\lfloor n/2 \rfloor + 1)g, \dots, (n - 1)g\}$. After renaming $xg$, $yg$ and $wg$ and exchanging $g$ and $-g$ if necessary, we may suppose that $-xg = (n - x)g \in C$ and $-yg = (n - y)g \in C$. Then WA is divisible by the product of two atoms $((xg)g^{n-x})((yg)g^{n-y})$. If $(n - x) + (n - y) = n$, then $n$ is even, $x = y = n/2$, and $(xg)(yg)$ is a zero-sum subsequence of $A$, a contradiction. Thus $(n - x) + (n - y) < n$ and WA is the product of at least three atoms. Since $A \neq -W$, it follows that $AW$ is a product of at most $n-1$ atoms, and thus we obtain a factorization of $u$ with length strictly between $|z|$ and $|z'|$, a contradiction.
	\end{proof}
	
	Lemma~\ref{lem:vital lemma} leads us to our first main result.
	
	\begin{theorem} \label{thm:first main result}
		Let $M$ be a Krull monoid with cyclic class group $G$ of order $n \ge 3$. If $x \in M$ and $n - 2 \in \Delta(x)$, then $\Delta(x) = \{n - 2\}$.
	\end{theorem}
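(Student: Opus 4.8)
The plan is to reduce to the block monoid, then isolate a single factorization step that realizes the extremal distance $n-2$, show that this step has exactly the shape required by Lemma~\ref{lem:vital lemma}, and finally invoke that lemma. By Proposition~\ref{prop:relation of Krull monoids and block monoids} together with Example~\ref{ex:realizable subsets}(6) it suffices to prove the statement for an element $u \in \mathcal{B}(G)$ with $G$ cyclic of order $n$; fix a generator $g$ and set $V = (-g)g$, $W = g^n$. Since $0$ is prime in $\mathcal{B}(G)$, factoring out its copies only shifts every length by a constant and leaves $\Delta(u)$ unchanged, so I may assume $0 \nmid u$. Using that $n-2 \in \Delta(u)$, fix consecutive lengths $\ell < \ell' = \ell + (n-2)$ in $\mathsf{L}(u)$ and factorizations $z, z'$ of $u$ with $|z| = \ell$ and $|z'| = \ell'$.

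The central observation is that a single factorization step cannot change the length by more than $n-2$. Invoking $\mathsf{c}(\mathcal{B}(G)) = n$, take an $n$-chain $z = z_0, z_1, \dots, z_k = z'$, and for a step write $z_i = \gcd(z_i,z_{i+1})\,\beta_1\cdots\beta_\sigma$ and $z_{i+1} = \gcd(z_i,z_{i+1})\,\gamma_1\cdots\gamma_\tau$ with disjoint tails, so that $\mathsf{d}(z_i,z_{i+1}) = \max\{\sigma,\tau\} \le n$ while the length changes by $\tau - \sigma$. If $\tau - \sigma \ge n-1$ then $\sigma \le 1$: the case $\sigma = 0$ is impossible, since no factorization properly divides another factorization of the same element, and $\sigma = 1$ would force a single atom $\beta_1 = \gamma_1\cdots\gamma_\tau$ to equal a product of $\tau \ge n \ge 3$ atoms. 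Hence each step changes the length by at most $n-2$.

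Because every $|z_i|$ lies in $\mathsf{L}(u)$, no $|z_i|$ falls strictly between the consecutive lengths $\ell$ and $\ell'$; as the chain runs from length $\ell$ to length $\ell'$, there is a crossing step $z_i \to z_{i+1}$ with $|z_i| \le \ell$ and $|z_{i+1}| \ge \ell'$. Combined with $|z_{i+1}| - |z_i| \le n-2 = \ell' - \ell$, this forces $|z_i| = \ell$, $|z_{i+1}| = \ell'$, and $\tau - \sigma = n-2$; together with $\tau \le n$ and the impossibility of $\sigma \in \{0,1\}$ this gives $\sigma = 2$ and $\tau = n$. Thus $Y = \beta_1\beta_2 = \gamma_1\cdots\gamma_n$ satisfies $|Y| = |\beta_1|+|\beta_2| \le 2n$ by the Davenport constant of $G$, while $|Y| = \sum_j |\gamma_j| \ge 2n$ since $0 \nmid u$ makes each $|\gamma_j| \ge 2$. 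Hence $|\beta_1| = |\beta_2| = n$, so $\beta_1 = h_1^n$ and $\beta_2 = h_2^n$ for generators $h_1,h_2$, and each $\gamma_j = (a_j)(-a_j)$ has length $2$. Pairing the elements of $h_1^n h_2^n$ into inverse pairs forces $h_2 = -h_1$, so after taking $g = h_1$ we obtain $Y = W(-W)$ and $\gamma_1\cdots\gamma_n = V^n$.

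Consequently $z := z_i$ is of the form $W^r(-W)^s B V^q$ with $r,s > 0$, and $z, z' := z_{i+1}$ realize consecutive lengths differing by $n-2$, so Lemma~\ref{lem:vital lemma} applies and yields $B = 0^t$; since $0 \nmid u$ this means $B$ is empty and $u = W^r(-W)^s V^q = g^a(-g)^b$. As the only atoms supported on $\{g,-g\}$ are $V$, $W$, and $-W$, every factorization of $g^a(-g)^b$ uses $i$ copies of $W$, $j$ copies of $-W$, and $p$ copies of $V$ with $ni + p = a$ and $nj + p = b$; eliminating $p$ shows that the admissible lengths form an arithmetic progression with common difference $n-2$, whence $\Delta(u) = \{n-2\}$. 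I expect the main obstacle to be the crossing-step analysis of the third paragraph: establishing the single-step length bound and then extracting the rigid extremal identity $W(-W) = V^n$, which is precisely what puts $z$ into the hypothesis of Lemma~\ref{lem:vital lemma}.
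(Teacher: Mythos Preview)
Your proof is correct and follows essentially the same route as the paper's: reduce to $\mathcal{B}(G)$, use $\mathsf{c}(\mathcal{B}(G))=n$ to build an $n$-chain, locate a crossing step where the length jumps by exactly $n-2$, identify the two-atom tail as $W(-W)$ via the Davenport-constant extremality, and then invoke Lemma~\ref{lem:vital lemma}. The only cosmetic differences are that you prove the single-step bound $\min\{\sigma,\tau\}\ge 2$ and the final arithmetic-progression structure of $\mathsf{L}(g^a(-g)^b)$ by hand, whereas the paper cites \cite[Lemma~1.6.2]{GH06} and \cite[Proposition~4.1.2]{GH06} for these.
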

	
	\begin{proof}
		By Proposition~\ref{prop:relation of Krull monoids and block monoids}, we need only prove the theorem for $\mathcal{B}(G, S)$ where $S \subseteq G$ is the set of classes which contain prime divisors. By our comment in Example~\ref{ex:realizable subsets}(6), it suffices to prove our theorem for $S = G$ (i.e., for $\mathcal{B}(G)$).
		
		Let $x \in \mathcal{B}(G)$ and $z, z' \in \mathsf{Z}(x)$ be such that $|z'| - |z| = n - 2$ and there are no factorizations of $x$ with length between $|z|$ and $|z'|$. As in Lemma~\ref{lem:vital lemma} we may suppose that $0 \nmid x$. We shall prove that there exists an element $g \in G$ such that $z$ is divisible by the atoms $g^n$ and $(-g)^n$. By a previous observation, we know that the catenary degree of $\mathcal{B}(G)$ is $n$. Therefore, $\mathsf{c}(x) \le n$ and so there exists an $n$-chain $z = z_0, z_1, \dots, z_k = z'$ of factorizations of $x$ from $z$ to $z'$. We also know, by \cite[Lemma~1.6.2]{GH06}, that
		\[
			\big{|}|z_{i+1}| - |z_i|\big{|} \le \mathsf{d}(z_i, z_{i+1}) - 2 \le n - 2.
		\]
		Let $j$ be the least index such that $|z_{j+1}| > |z|$. Then $|z_j| = |z|$ and $|z_{j+1}| = |z'|$ because there are no factorizations of $x$ with length between $|z|$ and $|z'|$ and also because $\big{|} |z_{i+1}| - |z_i|\big{|} \le n - 2$. It follows that $|z_{j+1}| - |z_j| = |z'| - |z| = n - 2$, and so
		\[
			\mathsf{d}(z_j, z_{j+1}) \ge \big{|}|z_{j+1}| - |z_j|\big{|} + 2 = n.
		\]
		Therefore, by redefining, if necessary, $z$ and $z'$ as $z_j$ and $z_{j+1}$ respectively, we can assume that $\mathsf{d}(z, z') = n$.
	
		Let $d$ be the greatest common divisor of $z$ and $z'$ in the factorization monoid $\mathsf{Z}(\mathcal{B}(G))$, and take $w$ and $w'$ so that $z = dw$ and $z' = dw'$. Notice that $|w'| - |w| = |z'| - |z| = n - 2$. As $\max\{|w|,|w'|\} = \mathsf{d}(z, z') = n$, we have $|w'| = n$ and $|w| = 2$; note that so far $|\cdot|$ referred to the length in the factorization monoid $\mathsf{Z}(\mathcal{B}(G)) = \mathcal{F}(\mathcal{A}(\mathcal{B}(G)))$.
		
		It is well known that $U \in \mathcal{A}(\mathcal{B}(G))$ implies that $|U| \le n$, and $|U| = n$ if and only if $U = g^n$ for some $g \in G$ (again by \cite[Theorem~5.1.10]{GH06}; now $|\cdot|$ refers to $\mathcal{F}(G)$). Since $w$ consists of only two atoms, say $w = U_1U_2$ with $U_1, U_2 \in \mathcal{A}(\mathcal{B}(G))$, we obtain $|U_1U_2| \le 2n$. Since $w'$ is a product of exactly $n$ atoms, say $w' = V_1 \cdots V_n$ with $V_1, \dots, V_n \in \mathcal{A}(\mathcal{B}(G))$, we infer that $|V_1| = \dots = |V_n| = 2$. Then $|U_1 U_2| = 2n$, and since $w'$ is divisible by an atom of length $2$, we see that $U_1 = W = g^n$ and $U_2 = -W = (-g)^n$ for some element $g \in G$ of order $n$.
		
		Now we can write $z = W^r(-W)^sBV^q$ where $V = (-g)g$ and $B$ is not divisible by $W$, $-W$, or $V$. Since there are no factorizations of $x$ having length between $|z|$ and $|z'|$, Lemma~\ref{lem:vital lemma} implies that $B = 1 \in \mathsf{Z}(\mathcal{B}(G))$ and $z = W^r(-W)^sV^q$. Now it easily follows that $\Delta(x) = \{n - 2\}$ (see \cite[Proposition~4.1.2]{GH06}).
	\end{proof}
	
	As an immediate consequence of Theorem~\ref{thm:first main result}, we have the following result.
	
	\begin{cor}
		Let $M$ be a Krull monoid with cyclic class group $G$ of order $n \ge 3$. If T is a nonempty subset of $\{1, \dots, n-2\}$ with $n-2 \in T$ but $T \neq \{n-2\}$, then $T$ is not realizable in $\Delta(M)$.
	\end{cor}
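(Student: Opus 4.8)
The plan is to argue by contraposition, leveraging Theorem~\ref{thm:first main result} directly. The statement to be proved is logically an immediate consequence of that theorem, so the proof will be short; the substantive work has already been carried out in establishing Theorem~\ref{thm:first main result} (and, through it, Lemma~\ref{lem:vital lemma}).

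First I would assume, toward a contradiction, that $T$ is realizable in $\Delta(M)$. By the definition of realizability, this means there exists an element $x \in M$ with $\Delta(x) = T$. Since by hypothesis $n - 2 \in T$ and $T = \Delta(x)$, we obtain $n - 2 \in \Delta(x)$.

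This membership is precisely the hypothesis of Theorem~\ref{thm:first main result}, so I would invoke that theorem to conclude $\Delta(x) = \{n - 2\}$. Combining this with $\Delta(x) = T$ yields $T = \{n - 2\}$, which contradicts the standing assumption $T \neq \{n - 2\}$. Hence no such $x$ can exist, and $T$ is not realizable in $\Delta(M)$.

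Since the entire argument reduces to a single application of Theorem~\ref{thm:first main result}, there is no genuine obstacle to overcome here. The only point requiring any care is to observe that the definition of realizability produces an element $x$ whose delta set equals $T$ \emph{exactly}, so that the membership $n - 2 \in T$ transfers to $n - 2 \in \Delta(x)$ and the theorem applies verbatim.
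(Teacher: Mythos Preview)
Your proof is correct and mirrors the paper's approach exactly: the paper states the corollary as ``an immediate consequence of Theorem~\ref{thm:first main result}'' and gives no further argument, so your direct application of that theorem via contradiction is precisely what is intended.
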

	
	\begin{example}
		Let $G$ be a cyclic group of order $|G| = 5$ and $g \in G$ an element of order $5$. We use the last two results to determine all the realizable sets of $\Delta(\mathcal{B}(G))$. Since $\Delta(\mathcal{B}(G)) = \{1, 2, 3\}$, Theorem~\ref{thm:first main result} implies that the only possible realizable sets are $\{1\}$, $\{2\}$, $\{3\}$, and $\{1, 2\}$. The singleton sets are guaranteed by Example~\ref{ex:realizable subsets}(5). Let $B = g^8(2g)(-g)^5$. We claim that $\Delta(B) = \{1, 2\}$. If $A$ is an atom with $(2g) \mid A \mid B$, we deduce that $A = (2g)g^3$ or $A = (2g)(-g)^2$. The fact that $\mathsf{L}(g^5(-g)^5) = \{2, 5\}$ implies the existence of factorizations of $B$ with lengths $3$ and $6$ when $A = (2g)g^3$. Observe also that having $A = (2g)(-g)^2$ determines uniquely the factorization of $B$ given by $z = ((2g)(-g)^2)(g(-g))^3(g^5) \in \mathsf{Z}(B)$. As a result, $\mathsf{L}(B) = \{3, 5, 6\}$ and, therefore, $\Delta(B) = \{1, 2\}$. Hence, $\{1\}$, $\{2\}$, $\{3\}$, and $\{1, 2\}$ determine the complete set of realizable sets of $\Delta(\mathcal{B}(G))$.
	\end{example}
	
	Our results to this point have produced delta sets of relatively small size. However, we now prove that we can have realizable sets of large size provided we choose an adequate finite cyclic group.
	
	\begin{theorem} \label{thm:Archimedean property}
		For every $m \in \nn$ there exist a Krull monoid M with finite cyclic class group and an element $x \in M$ such that $|\Delta(x)| \ge m$.
	\end{theorem}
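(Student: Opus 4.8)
The plan is to reduce, via Proposition~\ref{prop:relation of Krull monoids and block monoids}, to producing for each $m$ a cyclic group $G = \zz/n\zz$ and a block $B \in \mathcal{B}(G)$ with $|\Delta(B)| \ge m$; since $\mathcal{B}(G)$ is itself a Krull monoid with class group $G$ in which every class contains a prime divisor, this suffices. The governing idea is to assemble $B$ out of $m$ mutually ``independent'' sub-blocks, each of which individually has a two-element set of lengths, and to arrange that the global set of lengths is the sumset of these two-element sets; I would then engineer that sumset to have many distinct successive differences.

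Concretely, I would pick pairwise coprime integers $n_1, \dots, n_m \ge 3$ and use the CRT isomorphism $\zz/n\zz \cong \prod_i \zz/n_i\zz$, where $n = n_1 \cdots n_m$, to produce elements $e_1, \dots, e_m \in G$ with $e_i$ of order $n_i$ and ``supported'' only in the $i$-th coordinate. Set
\[
	B = \prod_{i=1}^m e_i^{n_i}(-e_i)^{n_i} \in \mathcal{B}(G).
\]
The first technical step is to show that every atom dividing $B$ is supported on a single pair $\{e_i, -e_i\}$: because the coordinate-$i$ sum of any zero-sum subsequence of $B$ must vanish modulo $n_i$ independently for each $i$, any such subsequence splits as a product of zero-sum sequences each living on one pair, so an atom cannot mix indices. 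Consequently the only atoms involved are $e_i(-e_i)$, $e_i^{n_i}$, and $(-e_i)^{n_i}$, every factorization of $B$ is a concatenation of factorizations of the sub-blocks $B_i = e_i^{n_i}(-e_i)^{n_i}$, and hence $\mathsf{L}(B) = \mathsf{L}(B_1) + \dots + \mathsf{L}(B_m)$. A direct count, balancing the multiplicities of $e_i$ and $-e_i$, gives $\mathsf{L}(B_i) = \{2, n_i\}$, so writing $d_i = n_i - 2$ we obtain that $\mathsf{L}(B)$ is, up to the constant shift $2m$, exactly the set of subset sums $\{\sum_{i \in I} d_i : I \subseteq \{1, \dots, m\}\}$.

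It then remains to choose the $n_i$ so that this subset-sum set has at least $m$ distinct consecutive differences. I would take the $d_i$ strictly super-increasing, say $d_{i+1} > 2 d_i$: then every subset sum is distinct, and for each $k$ the value $d_k$ is the immediate successor of $\sigma_{k-1} := d_1 + \dots + d_{k-1}$ in the sorted list, producing a ``reset gap'' $g_k = d_k - \sigma_{k-1}$; a short computation shows $g_{k+1} - g_k = d_{k+1} - 2 d_k > 0$, so $g_1 < g_2 < \dots < g_m$ are $m$ distinct elements of $\Delta(B)$. The main obstacle is reconciling the two competing demands on the $n_i$: the decoupling argument forces them to be pairwise coprime, while the gap analysis forces $d_i = n_i - 2$ to grow super-increasingly. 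Both can be met simultaneously by choosing the $n_i$ to be a sufficiently sparse increasing sequence of primes (e.g.\ with $n_{i+1} > 2 n_i$), which are automatically pairwise coprime and satisfy $d_{i+1} > 2 d_i$; this yields $|\Delta(B)| \ge m$ and completes the argument.
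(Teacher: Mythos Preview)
Your argument is correct and complete, but it is genuinely different from the paper's. The paper works with a \emph{single} generator $g$ of $G$: it fixes rapidly growing integers $b_1,\dots,b_m$, takes $n>\sum b_i$, and sets
\[
x=g^{\,2n-\sum b_i}(-g)^n\prod_{i=1}^m(b_ig)\in\mathcal{B}(G).
\]
The factorization analysis there is done by hand: one shows every factorization of $x$ contains exactly one atom of the form $A_I=g^{\,n-\sum_{i\in I}b_i}\prod_{i\in I}(b_ig)$, and then reads off $\mathsf{L}(x)$ as (a shift of) a set of subset sums of the $b_i$. Your route instead uses the CRT splitting $\zz/n\zz\cong\prod_i\zz/n_i\zz$ to manufacture $m$ \emph{independent} sub-blocks $B_i=e_i^{n_i}(-e_i)^{n_i}$, so that atom-decoupling makes $\mathsf{L}(B)=\sum_i\mathsf{L}(B_i)=\sum_i\{2,n_i\}$ a pure sumset with no delicate case analysis. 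What your approach buys is a cleaner, essentially structural computation of $\mathsf{L}(B)$; the price is that you need $n=\prod_i n_i$ (super-exponential in $m$) and the extra coprimality constraint, whereas the paper gets away with $n$ merely larger than an exponentially growing sum and never leaves the subgroup generated by $g$. Both constructions then finish with the same super-increasing trick to force $m$ distinct ``reset gaps'' $d_k-\sigma_{k-1}$ in the sorted subset-sum set, so the combinatorial endgame is identical.
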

	
	\begin{proof}
		Choose a natural number $b_0 > m$ and define $b_1, \dots, b_m$ recursively by setting $b_{k+1} = 2(\sum_{i=0}^k b_i) + 2m$ if $k > 0$. Let $B = b_1 \cdots b_m \in \mathcal{F}(\zz)$ be the sequence over $\zz$, $\sigma(B)$ its sum, and
		\[
			\Sigma(B) = \bigg\{ \sum_{i \in I} b_i \ \bigg{|} \ \emptyset \neq I \subseteq \{1,\dots,m\} \bigg\}
		\]
		its set of subsequence sums.
		
		Choose a finite cyclic group $G$ of order $|G| = n > \sigma(B)$ and set $M =\mathcal{B}(G)$. Let $g \in G$ with ord$(g) = n$, and consider the element
		\[
			x = g^{2n - \sigma(B)}(-g)^n \prod_{i=1}^m (b_ig) \in \mathcal{B}(G).
		\]
		For every subset $I \subseteq \{1, \dots, m\}$, the element $A_I = g^{n - \sum_{i \in I} b_i}\prod_{i \in I} (b_ig)$ is an atom of $\mathcal{B}(G)$. Suppose there are subsets $I, J \subseteq\{1, \dots, m\}$ such that $A_I A_J \mid x$. Then we get $I \cap J = \emptyset$, $I \uplus J \subseteq \{1, \dots, m\}$, and
		\[
			\bigg(n - \sum_{i \in I} b_i \bigg) + \bigg(n - \sum_{j \in J} b_j \bigg) = \mathsf{v}_g(A_I A_J) \le \mathsf{v}_g(x) = 2n - \sigma(B).
		\]
		Therefore $I \uplus J = \{1, \dots, m\}$ and $x = A_I A_J(-g)^n$. Let $z$ be a factorization of $x$. Then there is a subset $I \subseteq \{1, \dots, m\}$ such that $A_I \mid z$. If $A_J \nmid z$ for $J = \{1, \dots, m\} \setminus I$, then
		\[
			z = A_I \prod_{j \in J} ((b_jg)(-g)^{b_j})((-g)g)^{n - \sum_{j \in J} b_j}
		\]
		and $|z| = 1 + |J| + n - \sum_{j \in J} b_j$. Therefore
		\[
			\mathsf{L}(x) = \{3\} \cup \bigg\{ 1 + |J| + n - \sum_{j \in J} b_j \ \bigg{|} \ J \subseteq \{1, \dots, m\} \bigg\}.
		\]
		
		In order to point out that there are $m$ distinct elements in $\Delta(x)$, let
		\[
			L_k = \bigg\{ 1 + |J| + n - \sum_{j \in J} b_j \ \bigg{|} \ J \subseteq \{1, \dots, m\} \ \text{ with } \ \max J = k \bigg\}
		\]
		for each $k \in \{0, \dots, m\}$, with the usual convention that
		\[
			L_0 = \bigg\{ 1 + |\emptyset| + n - \sum_{j \in \emptyset} b_j = 1 + n \bigg\}.
		\]
		If $k \in \{1, \dots, m\}$, then
		\[
			\min L_{k-1} = 1 + (k - 1) + n - \sum_{j=1}^{k-1} b_j > 1 + 1 + n - b_k = \max L_k,
		\]
		and so
		\[
				\mathsf{L}(x) = \{3\} \cup \biguplus_{k=1}^m L_k.
		\]
		Hence, for each $k \in \{1, \dots, m\}$, the element $\min L_{k-1} - \max L_k = b_k - \sum_{j=1}^{k-1} b_j + (k - 2)$ is in $\Delta(x)$, and the growth condition on the elements $b_1, \dots, b_m$ guarantees that these values are pairwise distinct.
	\end{proof}
	
	Let $G$ be a finite abelian group with $|G| \ge 3$. By Theorem~\ref{thm:first main result}, cyclic groups have the following property: If $\max \Delta(\mathcal{B}(G)) - 2 \in \Delta(B)$ for some $B \in \mathcal{B}(G)$, then
	\[
		\Delta(B) = \{\max \Delta(\mathcal{B}(G)) - 2 \}.
	\]
	It was recently shown in \cite{GS14} that elementary $2$-groups do share this property, and it is a challenging problem to characterize all such groups. A further wide open question is to study
	\[
		\Lambda(G) = \max\{|\Delta(B)| \mid B \in \mathcal{B}(G)\}.
	\]
	If $G$ is cyclic of order $n \ge 13$, we have shown that $4 \le \Lambda(G) \le n - 3$.
	
	\section{Acknowledgments}
	
	It is a pleasure to thank the referee for valuable suggestions which resulted in an improvement of the manuscript. The authors were supported by National Science Foundation grants DMS-1035147 and DMS-1045082 and a supplemental grant from the National Security Agency.

\end{document}